\newtheorem{theorem}{Theorem}
\newtheorem{proposition}{Proposition}
\newtheorem{definition}{Definition}
\begin{document}

\title{\Large A labeling of the Simplex-Lattice Hypergraph with at most 2 colors on each hyperedge}
\author{Ognjen Papaz\thanks{Faculty of Philosophy, University of East Sarajevo, Bosnia and Herzegovina {\tt ognjen.papaz@ff.ues.rs.ba}}
 \and Du\v sko Joji\'c\thanks{Faculty of Science, University of Banja Luka, Bosnia and Herzegovina {\tt dusko.jojic@pmf.unibl.org}}}
\date{}
\maketitle

\begin{abstract}
This paper provides a positive answer to the question of Mirzakhani and Vondrak from \cite{MV} that asks if there is a Sperner-admissible labeling of the simplex-lattice hypergraph such that each hyperedge uses at most 2 colors.
\end{abstract}

\section{Introduction}

In (\cite{MV}, section 4) the authors proved that for $k\geq 4$ and $q\geq k^2$, there is a Sperner-admissible labeling of the simplex-lattice hypergraph $H_{k,q}$ such that every hyperedge of $H_{k,q}$ contains at most 4 colors.

They raised the question of whether there is a Sperner-admissible labeling  of $H_{k,q}$ such that every hyperedge of $H_{k,q}$ contains at most 2 colors, motivation being provided by the fact that an answer would have consequences for fair division, see (\cite{MV}, sections 7 and 8).

\section{Preliminaries}

We review basic definitions following \cite{MV}. 

Let $k\geq 3$ and $q\geq 1$ throughout the paper.

Denote with $R_{k,q}$ the simplex in $\mathbb R^{k-1}$ whose vertices are
\[(0,0,\ldots,0),(0,0,\ldots,0,q),\ldots,(0,q,q\ldots,q),(q,q,\ldots,q).\]

Let $V_{k,q}$ be the set of integer points in $R_{k,q}$, i.e.
\[V_{k,q}=\{\bm{v}=(v_1,v_2,\ldots,v_{k-1})\in \mathbb Z^{k-1}:0\leq v_i\leq v_{i+1}\leq q\}.\]

\textbf{The simplex-lattice hypergraph} is a $k$-uniform hypergraph $H_{k,q}$ whose vertex set is $V_{k,q}$ and whose set of hyperedges $E_{k,q}$ is given with
\[E_{k,q}=\{F(\bm{v}):\bm{v}\in V_{k,q-1}\},\]
where 
\[F(\bm{v})=\{\bm{v},\bm{v}+e_{k-1},\bm{v}+e_{k-1}+e_{k-2},\ldots,\bm{v}+e_{k-1}+e_{k-2}+\cdots+e_1\}.\]

The hyperedges of the simplex-lattice hypergraph $H_{k,q}$ correspond with the certain cells of the edgewise subdivision of the simplex $R_{k,q}$.\\

\textbf{The edgewise subdivision} of $R_{k,q}$ is its triangulation whose vertex set is $V_{k,q}$. The facets (maximal cells) of the triangulation are indexed with pairs $(\bm{v},\pi)$ where $\bm{v}\in V_{k,q-1}$ and $\pi\in\mathbb S_{k-1}$ is \textit{consistent} with $\bm{v}$. A permutation $\pi\in\mathbb S_{k-1}$ is consistent with $\bm{v}$ if $i$ appears before $i+1$ in $\pi$ whenever $v_i=v_{i+1}$. For each $\bm{v}\in V_{k,q-1}$ and each $\pi\in\mathbb S_{k-1}$ which is consistent with $\bm{v}$ the convex hull of the set
\[F(\bm{v},\pi)=\{\bm{v},\bm{v}+e_{\pi(k-1)},\bm{v}+e_{\pi(k-1)}+e_{\pi(k-2)},\ldots,\bm{v}+e_{\pi(k-1)}+e_{\pi(k-2)}+\cdots+e_{\pi(1)}\}\]
is a facet of the triangulation.

More details about the edgewise subdivision of a simplex can be found in \cite{ED}.\\

We see that each hyperedge $F(\bm{v})$ in $E_{k,q}$ is equal to $F(\bm{v},Id(\in\mathbb S_{k-1}))$. Extending this correspondence we define the $\pi$-simplex-lattice hypergraph.\\

\textbf{The $\pi$-simplex lattice hypergraph} is a $k$-uniform hypergraph $H_{k,q}^\pi \ (\pi\in\mathbb S_{k-1})$ whose set of hyperedges $E_{k,q}^\pi$ is given with
\[E_{k,q}^\pi=\{F(\bm{v},\pi):\bm{v}\in V_{k,q-1}, \pi\ \text{is consistent with}\ \bm{v}\}.\]

\textbf{The Sperner-admissible labeling} of the vertices $V_{k,q}$ is a mapping $\ell :V_{k,q}\to[k]$ such that $v_i>v_{i-1}$ whenever $\ell(\bm{v})=i$. 

Here we introduce the convention that $v_0=0$ and $v_k=q$.\\

\section{The labeling} 

\begin{definition}For each $\bm{v}\in V_{k,q}$ we define $r(\bm{v})$ and $i(\bm{v})$ in the following way:
\[r(\bm{v})=\max\{t-v_t:t\in[0,k]\},\]
\[i(\bm{v})=\min\{t\in[0,k]:t-v_t=r(v)\}.\]
Recall that $v_0=0$ and $v_k=q$.
\end{definition}

We can immediately see that $r(\bm{v})\geq v_0-0=0$ and that $i(\bm{v})=0$ if $r(\bm{v})=0$.\\

\begin{definition} We consider the mapping $\ell:V_{k,q}\to[k]$ defined with
\[\ell(\bm{v})=i(\bm{v})+1.\]
\end{definition}

\begin{proposition}
For $q>k$, the mapping $\ell$ is well-defined Sperner-admissible labeling of $V_{k,q}$.
\end{proposition}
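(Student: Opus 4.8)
The statement requires two things: that $\ell$ takes values in $[k]$ (well-definedness) and that the Sperner condition $v_i > v_{i-1}$ holds whenever $\ell(\bm v)=i$. My plan is to extract both from the single defining property of $i(\bm v)$, namely that $j := i(\bm v)$ is the least index at which the function $t \mapsto t - v_t$ attains its maximum $r(\bm v)$ over $t \in [0,k]$.

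For well-definedness I would show $0 \le i(\bm v) \le k-1$. The lower bound is immediate since $i(\bm v)$ is a minimum taken over $[0,k]$. For the upper bound, evaluating $t\mapsto t-v_t$ at $t=0$ gives $r(\bm v) \ge 0 - v_0 = 0$, whereas at $t=k$ it gives $k - v_k = k - q < 0$ precisely because $q > k$. Thus $t=k$ cannot attain the maximum, so $i(\bm v) \ne k$ and hence $i(\bm v) \le k-1$. This yields $\ell(\bm v) = i(\bm v) + 1 \in [k]$.

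For Sperner-admissibility, suppose $\ell(\bm v) = i$, equivalently $j := i(\bm v) = i-1$. By the well-definedness step $j \le k-1$, so the index $j+1 = i$ lies in $[1,k]$ and $v_{j+1}$ is defined (using $v_k = q$ when $i=k$). Since $j$ attains the maximum we have $r(\bm v) = j - v_j$; since $r(\bm v)$ is the maximum over all of $[0,k]$, I apply it at $t = j+1$ to get $(j+1) - v_{j+1} \le j - v_j$. Rearranging gives $v_{j+1} \ge v_j + 1 > v_j$, which is exactly $v_i > v_{i-1}$. This single inequality also covers the boundary case $j=0$ (where it reduces to $v_1 \ge 1 > 0 = v_0$) with no separate treatment.

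The argument is short, so there is no serious obstacle beyond bookkeeping; the one point that genuinely uses the hypothesis $q > k$ is the well-definedness step, and the crux of the whole proof is recognizing that the maximality of $r(\bm v)$, applied at the index one past $i(\bm v)$, immediately produces the required strict increase. I would take care that the conventions $v_0 = 0$ and $v_k = q$ are used consistently, since the maximization ranges over the extended set $[0,k]$ while the Sperner condition concerns the coordinates $v_1,\dots,v_{k-1}$ together with these two conventional values.
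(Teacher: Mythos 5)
Your proof is correct and follows essentially the same route as the paper's: you use $q>k$ to show $k-v_k<0$ so the index $k$ cannot attain the maximum (hence $\ell(\bm{v})\in[k]$), and you derive the Sperner condition by applying the maximality of $r(\bm{v})$ at the index one past $i(\bm{v})$, exactly as the paper does via $i-v_i\geq i+1-v_{i+1}$. Your additional remarks on the boundary conventions $v_0=0$, $v_k=q$ are careful bookkeeping but introduce no new idea.
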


\begin{proof} Let $\bm{v}\in V_{k,q}$ and $i=i(\bm{v})$. We first prove that $l(\bm{v})=i+1\in[k]$. Since $q>k$ we have $k-v_k=k-q<0$, hence $i<k$. Now we prove that $v_i<v_{i+1}$. By the definition of $i(\bm{v})$ we have that
\[i-v_i\geq i+1-v_{i+1},\]
hence $v_{i+1}\geq v_i+1$.
\end{proof}

\begin{theorem}
For $q>k$ and for the Sperner-admissible labeling $\ell$ each hyperedge of $H_{k,q}$ uses at most 2 colors. 
\end{theorem}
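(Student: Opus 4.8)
The plan is to fix a single hyperedge and track how the label evolves as we walk along its $k$ vertices. Fix $\bm v\in V_{k,q-1}$ and write the vertices of $F(\bm v)$ as $\bm v^{(0)},\bm v^{(1)},\dots,\bm v^{(k-1)}$, where $\bm v^{(j)}=\bm v+e_{k-1}+\cdots+e_{k-j}$. For the base point I would set $g(t)=t-v_t$ for $0\le t\le k$, so that $g(0)=0$ and $g(k)=k-q<0$. The key observation is that $\bm v^{(j)}$ increments exactly the coordinates with index in $\{k-j,\dots,k-1\}$; hence, writing $m=k-j\in\{1,\dots,k\}$, the quantity $t-\bm v^{(j)}_t$ equals
\[
g_m(t)=\begin{cases} g(t)-1,& m\le t\le k-1,\\ g(t),& \text{otherwise.}\end{cases}
\]
Since $g_m(k)=k-q<0\le g_m(0)$, the index $t=k$ never attains the maximum, so $r(\bm v^{(j)})=\max_{0\le t\le k-1}g_m(t)$ and $\ell(\bm v^{(j)})=\phi(m)+1$, where $\phi(m)$ denotes the smallest maximizer of $g_m$. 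It therefore suffices to prove that $\{\phi(m):1\le m\le k\}$ has at most two elements.

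I would then introduce $r^{*}=\max_{0\le t\le k-1}g(t)$, its smallest maximizer $i^{*}$, and $p^{-}$, the smallest index $t<i^{*}$ with $g(t)=r^{*}-1$ (undefined if no such index exists). The first, easy regime is $m>i^{*}$: here $i^{*}$ is not decremented, so $g_m(i^{*})=r^{*}$, whereas every decremented entry is at most $r^{*}-1$ and every undecremented entry of index below $i^{*}$ is at most $r^{*}-1$. Thus the maximum is still $r^{*}$, attained first at $i^{*}$, giving $\phi(m)=i^{*}$.

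The crux is the regime $m\le i^{*}$. Now $i^{*}$ itself is decremented, so $g_m(i^{*})=r^{*}-1$; since the suffix $\{m,\dots,k-1\}$ still contains $i^{*}$ and no entry of $g$ exceeds $r^{*}$, the decremented part contributes exactly $r^{*}-1$, while the undecremented prefix $\{0,\dots,m-1\}$ consists of indices strictly below $i^{*}$ and so contributes at most $r^{*}-1$. Hence the maximum value is frozen at the constant $r^{*}-1$ throughout this range, and it only remains to locate its smallest maximizer. Among the decremented indices the smallest maximizer is $i^{*}$ (the smallest global maximizer of $g$ that is $\ge m$), while among the undecremented indices a maximizer exists precisely when $p^{-}<m$, in which case the smallest one is $p^{-}$. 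Comparing these two candidates and using $p^{-}<i^{*}$ yields $\phi(m)=p^{-}$ when $p^{-}<m$ and $\phi(m)=i^{*}$ otherwise.

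Combining the two regimes gives $\phi(m)\in\{i^{*},p^{-}\}$ for every $m$, so along the hyperedge the label takes at most the two values $i^{*}+1$ and $p^{-}+1$, both lying in $[k]$ since $0\le p^{-}<i^{*}\le k-1$. I expect the main obstacle to be the bookkeeping in the regime $m\le i^{*}$: one must show that the maximum stays constant at $r^{*}-1$ for all such $m$, and that the only positions that can ever be the first maximizer are $i^{*}$ (from the decremented suffix) and $p^{-}$ (from the undecremented prefix). I would also treat the degenerate case $r^{*}=0$ separately, where $i^{*}=0$ and every vertex of the hyperedge receives color $1$.
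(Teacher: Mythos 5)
Your proposal is correct and takes essentially the same approach as the paper: your two regimes $m>i^{*}$ and $m\le i^{*}$ (further split at $p^{-}$) reproduce exactly the paper's three-phase walk along $\bm{v},\bm{v}^{(k-1)},\ldots,\bm{v}^{(1)}$, with your $i^{*}$ and $p^{-}$ being the paper's $i(\bm{v})$ and $i'=i(\bm{v}^{(i)})$. The only cosmetic difference is that you compute the first maximizer of each shifted function $g_m$ statically, whereas the paper tracks how $r(\cdot)$ and $i(\cdot)$ evolve vertex by vertex, deriving $v_i=v_{i-1}$ to exhibit the second color $i'+1$.
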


\begin{proof}
Let $F(\bm{v})$ be a hyperedge of $H_{k,q}$ and let $i=i(\bm{v})$ and $r=r(\bm{v})$. We denote the vertices of $F(\bm{v})$ in the following way
\[\bm{v}=\bm{v}, \bm{v}^{(k-1)}=\bm{v}+e_{k-1},\bm{v}^{(k-2)}=\bm{v}+e_{k-1}+e_{k-2},\ldots,\bm{v}^{(1)}=\bm{v}+e_{k-1}+e_{k-2}+\cdots+e_1.\]
Note that the vertices $\bm{v}^{(k-1)}, \bm{v}^{(k-2)},\ldots,\bm{v}^{(1)}$ are obtained by increasing coordinates of $\bm{v}$, one by one, from behind.

Let $r=0$, then $i=0$.  Since increasing the coordinates of $\bm{v}$ doesn't increase $r(\bm{v})$ we have that
\[0=r(\bm{v}^{(k-1)})=r(\bm{v}^{(k-2)})=\ldots=r(\bm{v}^{(1)}),\]
\[0=i(\bm{v}^{(k-1)})=i(\bm{v}^{(k-2)})=\ldots=i(\bm{v}^{(1)})\]
and
\[1=\ell(\bm{v})=\ell(\bm{v}^{(k-1)})=\ldots=\ell(\bm{v}^{(1)}).\]

Now suppose that $r>0$, then $i>0$. Increasing the coordinates $v_{i+1},\ldots,v_{k-1}$ of $\bm{v}$ doesn't change $r(\bm{v})$ or $i(\bm{v})$, hence
\[r=r(\bm{v}^{(k-1)})=r(\bm{v}^{(k-2)})=\ldots=r(\bm{v}^{(i+1)}),\]
\[i=i(\bm{v}^{(k-1)})=i(\bm{v}^{(k-2)})=\ldots=i(\bm{v}^{(i+1)}),\]
and
\[i+1=\ell(\bm{v})=\ell(\bm{v}^{(k-1)})=\ldots=\ell(\bm{v}^{(i+1)}).\]

Let's take a closer look at the vertex $\bm{v}^{(i)}$. By the definition of $r(\bm{v})$ we have
\[i-1-v^{(i)}_{i-1}=i-1-v_{i-1}\leq r(\bm{v}^{(i)})<r=i-v_i.\]
From here we see that $v_i<v_{i-1}+1$, hence $v_i=v_{i-1}$ and $r(\bm{v}^{(i)})=r-1$. 

Let $i'=i(\bm{v}^{(i)})$, this is the smallest index $t\in[0,k]$ such that $t-v_t=r-1$. We have 
\[r-1=r(\bm{v}^{(i)})=r(\bm{v}^{(i-1)})=\ldots=r(\bm{v}^{(i'+1)}),\]
\[i'=i(\bm{v}^{(i)})=i(\bm{v}^{(i-1)})=\ldots=i(\bm{v}^{(i'+1)}),\]
and  
\[i'+1=\ell(\bm{v}^{(i)})=\ell(\bm{v}^{(i-1)})=\ldots=\ell(\bm{v}^{(i'+1)}).\]

If $r-1=0$ then $i'=0$ and we are done. Let $r-1>0$, then $i'>0$. When we increase the coordinates $v_{i'},v_{i'+1},\ldots,v_{i},\ldots,v_{k-1}$ of $\bm{v}$ and obtain $\bm{v}^{(i')}$ we can see that $r(\bm{v}^{(i')})=r-1$ and that $i$ becomes the smallest index $t\in[0,k]$ such that $t-v^{(i')}_t=r-1$. Thus, 
\[r-1=r(\bm{v}^{(i')})=r(\bm{v}^{(i'-1)})=\ldots=r(\bm{v}^{(1)}),\]
\[i=i(\bm{v}^{(i')})=i(\bm{v}^{(i'-1)})=\ldots=i(\bm{v}^{(1)}),\]
and
\[i+1=\ell(\bm{v}^{(i')})=\ell(\bm{v}^{(i'-1)})=\ldots=\ell(\bm{v}^{(1)}).\]
\end{proof}

Now we consider if there exist a Sperner-admissible labeling of $V_{k,q}$ such that each hyperedge of $H_{k,q}^\pi\ (\pi\in\mathbb S_{k-1})$ uses at most 2 colors.\\

Analyzing the proof of the previous theorem we see that the labeling $\ell$ works because among all coordinates $v_t$ of $\bm{v}$ such that $t-v_t=r(\bm{v})$ the coordinate $v_{i(\bm{v})}$ is the last coordinate that increases when obtaining the vertices of the hyperedge $F(\bm{v})$.

In a hyperedge 
\[F(\bm{v},\pi)=\{\bm{v},\bm{v}+e_{\pi(k-1)},\bm{v}+e_{\pi(k-1)}+e_{\pi(k-2)},\ldots,\bm{v}+e_{\pi(k-1)}+e_{\pi(k-2)}+\cdots+e_{\pi(1)}\}\]
of $H_{k,q}^\pi$, the permutation $\pi$ prescribes the order in which the coordinates of $\bm{v}$ are increasing.
 Thus, we can modify the definitions of $i(\bm{v})$ and $\ell$ accordingly and show that each hyperedge of $H_{k,q}^\pi$ will use at most two colors by the same arguments as for $\ell$.

\begin{definition} For $\pi\in\mathbb {S}_{k-1}$, let $\overline\pi=0\pi k$. For each $\bm{v}\in V_{k,q}$ we define $i^\pi(\bm{v})$ with 
\[i^\pi(\bm{v})=\min\{t\in[0,k]:\overline\pi(t)-v_{\overline\pi(t)}=r(\bm{v})\}\]
and $\ell^\pi(\bm{v})$ with $\ell^\pi(\bm{v})=i^\pi(\bm{v})+1$. 
\end{definition}

\begin{theorem}
For $q>k$ the mapping $\ell^\pi$ is a Sperner-admissible labeling such that each hyperedge of $H_{k,q}^\pi$ uses at most 2 colors.
\end{theorem}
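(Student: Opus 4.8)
The plan is to reduce the statement to the already-settled case $\pi=\mathrm{Id}$ (Proposition 1 and Theorem 1) by passing to ``$\overline\pi$-coordinates.'' For a vertex $\bm w$ and a position $p\in[0,k]$ put $d_p(\bm w)=\overline\pi(p)-w_{\overline\pi(p)}$. Then $r(\bm w)=\max_p d_p(\bm w)$, the quantity $i^\pi(\bm w)=\min\{p:d_p(\bm w)=r(\bm w)\}$ is the smallest maximizing position, and $d_0(\bm w)=0$, $d_k(\bm w)=k-q<0$, just as in the unpermuted case. Two observations drive everything: the Sperner condition at $\bm w$ is controlled by the coordinate $\overline\pi(i^\pi(\bm w))$ that attains $r(\bm w)$, while the number of colors on a hyperedge equals the number of distinct values of $i^\pi(\bm w)$ taken along it, because the color is an injective function of $i^\pi(\bm w)$.

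I would first dispose of Sperner-admissibility, imitating Proposition 1. With $j=\overline\pi(i^\pi(\bm v))$, maximality of $d$ at the position $i^\pi(\bm v)$ gives $j-v_j\ge(j+1)-v_{j+1}$, hence $v_{j+1}>v_j$; and $q>k$ forces $j\ne k$ (because $d_k<0\le r$), so the color lies in $[k]$ and the defining strict inequality holds. This part is routine and, apart from the bookkeeping, identical to the $\pi=\mathrm{Id}$ case.

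The substance of the theorem is the two-color bound, and the key point is that, in $\overline\pi$-coordinates, traversing a hyperedge $F(\bm v,\pi)$ is order-isomorphic to traversing a hyperedge of $H_{k,q}$. Concretely, the step from $\bm v+e_{\pi(k-1)}+\cdots+e_{\pi(s+1)}$ to $\bm v+e_{\pi(k-1)}+\cdots+e_{\pi(s)}$ increments the single coordinate $\overline\pi(s)$, which lowers $d_s$ by $1$ and leaves every other $d_p$ untouched. Hence along the hyperedge the entries $d_{k-1},d_{k-2},\dots,d_1$ are decremented one at a time, in that order, while $d_0=0$ and $d_k<0$ remain fixed --- precisely the dynamics governing the proof of Theorem 1, with the position $p$ now in the role of the coordinate index $t$. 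The argument there therefore applies verbatim and shows that $i^\pi(\bm w)$ takes at most two values along $F(\bm v,\pi)$: it stays equal to $i:=i^\pi(\bm v)$ until $d_i$ is decremented (whereupon $r$ drops by one), then equals the new smallest maximizer $i'$, and finally jumps back to $i$ as soon as $d_{i'}$ is decremented. Since the color is an injective function of $i^\pi(\bm w)$, at most two colors occur.

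The one thing that genuinely has to be checked --- the only place where the permutation could interfere --- is that incrementing $\overline\pi(s)$ changes $d_s$ and nothing else, so that the decrement dynamics in position space are genuinely order-isomorphic to the unpermuted ones; this is immediate from $d_p$ depending only on the single coordinate $\overline\pi(p)$. After that, I would make sure that the proof of Theorem 1 uses only the features $d_0=0$, $d_k<0$, integrality of the $d_p$, and the fixed decrement order, and never the monotonicity $v_i\le v_{i+1}$. The only step that appears to use monotonicity, the identity $r(\bm v^{(i)})=r-1$, in fact follows directly from $i$ being the smallest maximizer together with integrality, so no genuine obstacle arises and the two-color bound transfers wholesale from $H_{k,q}$ to $H_{k,q}^\pi$.
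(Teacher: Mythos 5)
Your proposal is correct in substance and follows exactly the route the paper intends: the paper gives no written proof of this theorem, only the remark preceding Definition~3 that the arguments for $\ell$ apply once $\pi$ prescribes the order of increments, and your position-space quantities $d_p(\bm w)=\overline\pi(p)-w_{\overline\pi(p)}$ make that remark precise. Your two substantive checks are the right ones. First, the step to $\bm v^{(s)}$ along $F(\bm v,\pi)$ decrements $d_s$ alone, so the vector $(d_0,\dots,d_k)$ evolves literally as in Theorem~1. Second, and more importantly, the proof of Theorem~1 as written does invoke the monotonicity $v_{i-1}\le v_i$ (to conclude $v_i=v_{i-1}$ and hence $r(\bm v^{(i)})=r-1$), and monotonicity has no analogue for the sequence $d_0,d_1,\dots,d_k$ when $\pi\neq\mathrm{Id}$; your replacement --- $d_i$ itself drops to $r-1$, while positions $p<i$ satisfy $d_p\le r-1$ by minimality of $i$ together with integrality --- is precisely the repair needed to make ``the same arguments'' legitimate, a point the paper's sketch glosses over.

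One point must be made explicit, because as written your Sperner paragraph and the paper's Definition~3 certify different labelings. You prove $v_{j+1}>v_j$ for the coordinate $j=\overline\pi(i^\pi(\bm v))$, which establishes Sperner-admissibility of the labeling $\bm v\mapsto\overline\pi(i^\pi(\bm v))+1$; the paper's formula $\ell^\pi(\bm v)=i^\pi(\bm v)+1$ uses the \emph{position} instead, and for that literal formula admissibility actually fails: take $k=3$, $q=4$, $\pi$ given by $\pi(1)=2$, $\pi(2)=1$, and $\bm v=(1,1)$; then $r(\bm v)=1$ is attained only at the coordinate $2=\overline\pi(1)$, so $i^\pi(\bm v)=1$ and $\ell^\pi(\bm v)=2$, yet $v_2=v_1$. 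So Definition~3 is evidently a slip for $\ell^\pi(\bm v)=\overline\pi(i^\pi(\bm v))+1$ (the coordinate ``last to increase,'' as the paper's own motivating paragraph says), and your argument proves the theorem for that corrected labeling; your two-color count is unaffected by the choice, since either formula is injective in $i^\pi(\bm w)$. State the color formula once, explicitly, and your proof is complete.
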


\bibliographystyle{plain}

\end{document}